\newtheorem{theorem}{Theorem}[section]
\newtheorem{lemma}[theorem]{Lemma}
\newtheorem{proposition}[theorem]{Proposition}
\theoremstyle{definition}
\newtheorem{example}[theorem]{Example}
\theoremstyle{remark}
\newtheorem{remark}[theorem]{Remark}
\newcommand{\bC}{\mathbb C}
\newcommand{\bR}{\mathbb R}
\newcommand{\bS}{\mathbb S}
\newcommand{\aut}{\mathrm{Aut}}
\DeclareMathOperator{\im}{Im}
\DeclareMathOperator{\re}{Re}
\DeclareMathOperator{\esstype}{ess\, type}
\DeclareMathOperator{\mult}{mult}
\begin{document}
\title[Holomorphic mappings between pseudoellipsoids]{Holomorphic mappings between pseudoellipsoids in different dimensions}
\author{Peter Ebenfelt}
\address{Department of Mathematics, University of California at San Diego, La Jolla, CA 92093-0112}
\email{pebenfel@math.ucsd.edu,}
\author{Duong Ngoc Son}
\address{Department of Mathematics, University of California at Irvine, Irvine, CA 92697-3875}
\email{snduong@math.uci.edu}
\date{\today}
\thanks{The first author was partly supporting by the NSF grant DMS-1001322.
The second author acknowledges a scholarship from the Vietnam Education Foundation.}
\begin{abstract} We give a necessary and sufficient condition for the existence of nondegenerate
holomorphic mappings between pseudoellipsoidal real hypersurfaces, and  provide an explicit
parametrization for the collection of all such mappings (in the situations where they exist).
\end{abstract}
\maketitle

\section{Introduction}

In recent years, much effort has been devoted to the ambitious program
of classifying local holomorphic mappings $H$ (or more generally CR mappings)
sending a given real hypersurface $M\subset \bC^{n+1}$ into a model
hypersurface $M'\subset \bC^{N+1}$. In the strictly pseudoconvex case,
the natural model is the sphere $M'=\bS^{2N+1}\subset \bC^{N+1}$ (and,
more generally, in the Levi nondegenerate case the model
is the hyperquadric of signature $l$). Of particular interest is the case
where the source manifold $M$ is the model itself.
There is a large body of work studying local holomorphic mappings $H$
sending a piece of the sphere $\bS^{2n+1}$ into $\bS^{2N+1}$. The reader
is referred to e.g.\ \cite{Webster79}, \cite{Faran86}, \cite{Forstneric86},
\cite{DAngelo88}, \cite{Dangelo91}, \cite{Huang99}, \cite{HuangJi01},
\cite{HuangJiXu06}, \cite{JPDLeblPeters07}, \cite{HuangJiYin12} and the
references therein; see also \cite{EHZ04}, \cite{EHZ05} for the case of
general strictly pseudoconvex source manifolds $M$. The classification of such  holomorphic mappings in {\it low codimensions} $N-n$ is completely
understood due to the works \cite{Faran86}, \cite{HuangJi01}, \cite{HuangJiXu06},
\cite{HuangJiYin12}. In particular, it was shown in \cite{Faran86} that any
local holomorphic mapping $H$ sending a piece of $\bS^{2n+1}$ into $\bS^{2N+1}$
with $N-n<n$ is necessarily of the form $H=T\circ L$, where $L$ denotes the
standard linear embedding of $\bS^{2n+1}$ into an $(n+1)$-dimensional complex
subspace section of $\bS^{2N+1}$ and $T$ is an automorphism of $\bS^{2N+1}$.
(This is often referred to as {\it rigidity}.)
In this note, we shall consider local (non-constant) holomorphic mappings
sending a pseudoellipsoid in $\bC^{n+1}$ into another pseudoellipsoid in
$\bC^{N+1}$ in low codimension.
A pseudoellipsoid in $\bC^{N+1}$ is a compact, algebraic real hypersurface
of the form
\begin{equation}\label{psellipse}
 E^N_q = \{(z,w)\in \bC^n\times \bC \colon \langle z^q,\bar{z}^q \rangle + |w|^2 =1\}
\end{equation}
where $q = (q_1,q_2,\dots q_N)$ is an $N$-tuple of integers with $q_j\ge 1$,
$z^q = (z_1^{q_1},\dots ,z_N^{q_N})$ and
$\langle \cdot,\cdot\rangle $ denotes the standard hermitian form in $\bC^N$,
$$
\langle u,v\rangle:=\sum_{j=1}^{N}u_jv_j,\quad u,v\in\bC^N.
$$
We note that $E^N_q$ is weakly pseudoconvex along those coordinate planes $z_j=0$
for which $q_j\geq 2$, and in particular at the point $p_0:=(0,\ldots,0,1)\in E^N_q$
(unless all $q_j=1$ and $E^N_q$ is the sphere). The pseudoellipsoids are natural models
(albeit not homogeneous in general, of course) for certain classes of weakly pseudoconvex
hypersurfaces. The domains that they bound are the only ones, up to biholomorphic
equivalence, with noncompact automorphism groups in a fairly general class of
smoothly bounded pseudoconvex domains \cite{BedfordPinchuk91} (as with the case of the ball in the strictly pseudoconvex category \cite{Wong77}); see also \cite{IsaevKrantz99}. Biholomorphic equivalence of pseudoellipsoids, their automorphism groups, as well as the existence
and geometric properties of (non-constant) local holomorphic mappings between
pseudoellipsoids in the equidimensional case (i.e.\ the source and target are both
hypersurfaces in $\bC^{n+1}$) have been investigated in e.g.\ \cite{Landucci84},
\cite{DiniSelvaggi91}, \cite{DiniSelvaggi97}, \cite{LanducciSpiro09},
\cite{MontiMorbidelli12}. In particular,
the following result follows from these works:
\medskip

\noindent
{\bf Theorem 0.} {\it Let $p=(p_1,\ldots, p_n)$ and
$q=(q_1,\ldots, q_n)$ be $n$-tuples of positive integers. Then there exists a
non-constant local holomorphic mapping $H\colon (\bC^{n+1},p_0)\to (\bC^{n+1},p_0)$,
with $p_0:=(0,\ldots, 0,1)\in \bC^{n+1}$, sending $E^n_p$ into $E^n_q$ if and only
if there exists a permutation
$\sigma\colon \{1,\ldots,n\}\to \{1,\ldots,n\}$ such that $q_k|p_{\sigma(k)}$
for all $k=1,\ldots, n$. 

Moreover, if such mappings $H$ exist, then the collection of all such $H$ can be described as follows:
$$
H(z,w)=T\circ (z_{\sigma(1)}^{p_{\sigma(1)}/q_1},\ldots,z_{\sigma(n)}^{p_{\sigma(n)}/q_n}, w),
$$
where $\sigma$ ranges over all permutations $\{1,\ldots,n\}\to \{1,\ldots,n\}$ such that $q_k|p_{\sigma(k)}$, for $k=1,\ldots, n$, and $T$ over the automorphisms of $E^n_q$.}
\medskip

\noindent
A complete and explicit description of the automorphism group of $E^n_q$ also follows from the works mentioned above (see e.g.\ \cite{Landucci84}). For the readers convenience, we provide in Section \ref{s:auto} a description (or, more precisely, a decomposition into elementary mappings) of the stability group of $E^n_q$ at $p_0$, i.e.\ the group of automorphisms of $E^n_q$ preserving $p_0$.

The main purpose of this note is to extend Theorem 0 to the positive (but low) codimensional situation. We note, for reference,
that the defining equation for $E^N_q$ is plurisubharmonic and, hence, by a standard
application of the Hopf boundary point lemma it follows that any nonconstant holomorphic
mapping sending $E^n_p$ into $E^N_q$ is necessarily transversal to $E^N_q$.
In what follows, we shall only consider holomorphic mappings that are transversal to their target manifolds.

It is convenient to note that $E^N_q$ minus a point is biholomorphically equivalent (via a linear fractional transformation) to the real algebraic hypersurface given by
\begin{equation}\label{e101}
 P_q^N = \{(z,w) \in \bC^N \times \bC: \im w = \langle z^q, \bar z^q\rangle \},
\end{equation}
with the point $p_0:=(0,\ldots,0,1)$ on $E^N_q$ corresponding to the origin on $P^N_q$.

Our main result is a necessary and sufficient condition for
the existence of local holomorphic mappings $H\colon (\bC^{n+1},0)\to (\bC^{N+1},0)$
sending $P^n_{p}$ transversally into $P^N_{q}$, as well as a description of the collection of all such mapping (when they exist). The latter description is also given in an explicit formula in Theorem \ref{explicitformula} below.

\begin{theorem}\label{Main1} Consider $P^n_{p}\subset \bC^{n+1}$ and
$P^N_{q}\subset \bC^{N+1}$, where $p=(p_1,\ldots, p_n)$ and $q=(q_1,\ldots, q_N)$
are an $n$-tuple and $N$-tuple, respectively, of positive integers, the latter arranged such that
$q_1=\ldots=q_s=1$ and $q_{k}\geq 2$, $k=s+1,\ldots, N$, for some $s\geq0$. Assume that
\begin{equation}\label{codimcond}
N-n<n.
\end{equation}
The following are equivalent:
\begin{enumerate}[{\rm (i)}]
\item There exist a subset $K\subset \{s+1,\dots, N\}$ (possibly empty) and a
map $\sigma\colon K\to \{1,\ldots, n\}$ such that \(\#\sigma(K)\ge n-s\) and
\(q_k\, |\, p_{\sigma(k)} \ \text{for all}\ k\in K\).
\item There exists a local holomorphic mapping $H\colon (\bC^{n+1},0)\to (\bC^{N+1},0)$
sending $P^n_{p}$ into $P^N_{q}$, transversal to $P^N_{q}$ at $0$.
\end{enumerate}
Moreover, if {\rm (ii)} holds then the collection of all such mappings $H$ can be described as follows. Let  $\sigma$ and $K$  be as in {\rm (i)}, and $W=(u_{ij})$ an $n\times N$ matrix such that
\begin{enumerate}[{\rm (a)}]
\item $WW^*=I_{n\times n}$, and
\item for every $j\in \{s+1,\ldots, N\}$, it holds that $u_{ij}\neq 0$ if and only if $j\in K$ and $\sigma(j)=i$.
\end{enumerate}
Then, the monomial mapping $H_{\sigma,W}(z,w)=(F(z),w)$, where
\begin{equation}\label{HsigmaW}
F_j(z) =
\begin{cases}
\sum_{i=1}^nu_{ij}z_i^{p_i},\quad &j=1,\ldots, s,\\
\left( u_{\sigma(j)j}z_{\sigma(j)}^{p_{\sigma(j)}}\right)^{1/q_j},\quad &j\in K,\\
0,\quad &j\in \{s+1,\ldots, N\}\setminus K.
\end{cases}
\end{equation}
sends $P^n_{p}$ transversally into $P^N_{q}$, and any mapping $H$ as in {\rm (ii)} is of the form $H=T\circ H_{\sigma,W}$ for some $\sigma$ (and $K$), $W$,  and $T$, where $T$ is an automorphism of $P^N_{q}$ preserving the origin.
\end{theorem}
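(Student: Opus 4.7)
The plan is to reduce the problem to the classification of transversal holomorphic maps between Heisenberg hypersurfaces, and then to invoke the Huang--Ji--Xu rigidity theorem (\cite{HuangJi01}, \cite{HuangJiXu06}), which applies precisely in the low-codimension regime $N-n<n$. Write $H = (F_1,\ldots,F_N, g)$. Since $\im g = \sum_k |F_k|^{2q_k} = \sum_k |F_k^{q_k}|^2$ on $P^n_p$, the auxiliary map $H^\sharp := (F_1^{q_1},\ldots,F_N^{q_N}, g)$ sends $P^n_p$ into the Heisenberg hypersurface $\{\im w = |z'|^2\}\subset \bC^{N+1}$. The core claim to establish is that each component of $H^\sharp$ depends on the source variables only through $(z^p, w) := (z_1^{p_1},\ldots,z_n^{p_n}, w)$; equivalently, that $H^\sharp$ is invariant under the action of $\Gamma_p := \prod_j \mu_{p_j}$ on $z$ by $z_j \mapsto \omega_j z_j$ with $\omega_j^{p_j}=1$. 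Granted this, $H^\sharp$ factors as $H^\sharp(z,w) = G(z^p, w)$ for a transversal holomorphic map $G\colon (\mathbb H^n, 0)\to (\mathbb H^N, 0)$ between Heisenberg hypersurfaces.

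To prove the core claim, I would polarize the mapping equation into the formal-power-series identity
\begin{equation*}
g(z,w) - \bar g\bigl(\zeta, w - 2i\langle z^p, \zeta^p\rangle\bigr) = 2i\sum_{k=1}^N F_k(z, w)^{q_k}\,\overline{F_k}\bigl(\zeta, w - 2i\langle z^p, \zeta^p\rangle\bigr)^{q_k},
\end{equation*}
valid in $(z,\zeta, w)$, and differentiate in $\zeta_j$ to orders strictly less than $p_j$ at $\zeta = 0$. Since the quantity $w - 2i\langle z^p,\zeta^p\rangle$ depends on $\zeta_j$ only through $\zeta_j^{p_j}$, these $\zeta$-derivatives at $\zeta = 0$ see only the direct $\zeta$-dependence of $\overline{F_k}^{q_k}$ and $\bar g$, isolating coefficients that must transform trivially under $\Gamma_p$; transversality of $H$ provides the nondegeneracy needed to extract $\Gamma_p$-invariance of each $F_k^{q_k}$ and $g$. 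Applying Huang--Ji--Xu to the resulting $G$ then yields $G = T\circ L$, where $L(Z, w) = (VZ, 0,\ldots,0, w)$ is an isometric linear embedding with $V$ an $n\times N$ matrix satisfying $VV^* = I_{n\times n}$, and $T\in \aut(\mathbb H^N, 0)$.

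It remains to recover $F_k$ from $F_k^{q_k} = G_k(z^p, w)$ and extract the combinatorial data. For $k\leq s$ (where $q_k = 1$) the recovery is tautological and produces the linear-combination components in \eqref{HsigmaW}. For $k\geq s+1$ (where $q_k\geq 2$) one needs $G_k(z^p, w)$ to be the $q_k$-th power of a single-valued holomorphic function vanishing at $0$; using the explicit description of $\aut(P^N_q, 0)$ inside $\aut(\mathbb H^N, 0)$ provided in Section~\ref{s:auto}, this holomorphy constraint forces $G_k$ either to vanish identically (giving $k\notin K$), or to reduce to a single monomial $c_k z_{\sigma(k)}^{p_{\sigma(k)}}$ with $q_k\mid p_{\sigma(k)}$ (giving $k\in K$ together with the divisibility). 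The cardinality condition $\#\sigma(K)\geq n - s$ then follows from $VV^* = I_{n\times n}$: each column of $V$ indexed by $k\geq s+1$ has support contained in $\{\sigma(k)\}$ (or is zero), so the $n$ linearly independent rows of $V$ live in the coordinate subspace indexed by $\{1,\ldots, s\}\cup \sigma(K)$, forcing $\#\sigma(K) + s \geq n$. Assembling the residual freedom in the first $s$ columns of $V$ and the post-composition by $T$ yields $H = T\circ H_{\sigma, W}$. The direction (i) $\Rightarrow$ (ii) is a direct calculation: conditions $WW^* = I_{n\times n}$ and (b) imply $\sum_k |F_k|^{2q_k} = \sum_i |z_i|^{2p_i}$, so $H_{\sigma, W}$ sends $P^n_p$ transversally into $P^N_q$. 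The main obstacle I anticipate is the reflection-based proof of $\Gamma_p$-invariance in the reduction step, where converting the formal differentiation scheme into a rigorous nondegeneracy argument (and pinning down the precise role of transversality in that argument) is the most delicate part.
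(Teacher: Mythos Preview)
Your overall strategy---lift to the Heisenberg hypersurface via $\phi_q\circ H$, factor through $\phi_p$, apply low-codimension sphere rigidity, then unwind component by component---is exactly the paper's. There is, however, a genuine gap in your reduction step, and two further points of comparison worth noting.

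\textbf{The factorization step.} You propose to prove directly that $H^\sharp=\phi_q\circ H$ is $\Gamma_p$-invariant by differentiating the polarized mapping identity in $\zeta$ to orders below $p$. The relations this produces say only that $g(z,w)$, and certain $w$-dependent linear combinations of the $F_k^{q_k}(z,w)$, equal functions of $w$ alone; they do not separate the individual $F_k^{q_k}$ nor force any of them to depend on $z$ only through $z^p$. Transversality gives you $\partial_w g(0)\ne 0$, but that nondegeneracy does not by itself untangle the system. The paper sidesteps this entirely with a generic-point trick: pick $a\in P^n_p$ near $0$ with all $z_j\ne 0$, so that $\phi_p$ is a local biholomorphism near $a$; after pre- and post-composing with suitable automorphisms, the map $\phi_q\circ H\circ\phi_p^{-1}$ becomes a local map $(\mathbb H^n,0)\to(\mathbb H^N,0)$, to which Faran's rigidity (already sufficient for $N-n<n$; the full Huang--Ji--Xu hierarchy is not needed) applies. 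The resulting identity $\phi_q\circ H=T'\circ L\circ\phi_p$ then analytically continues from a neighborhood of $a$ to a neighborhood of $0$, yielding the factorization $H^\sharp=G\circ\phi_p$ for free, with no invariance argument whatsoever.

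\textbf{The cardinality bound.} Your linear-algebra argument for $\#\sigma(K)\ge n-s$ is correct and is more direct than the paper's route: the paper instead proves a separate lemma (via essential finiteness of $P^n_p$) that any transversal map $P^n_p\to P^N_q$ is finite at $0$, and then reads the inequality off from finite codimension of $\mathcal I(H)$. Your observation that the column space of the first $n$ rows of the unitary $U$ spans $\bC^n$, while columns indexed by $j>s$ lie in $\operatorname{span}\{e_{\sigma(j)}:j\in K\}$, gives $s+\#\sigma(K)\ge n$ immediately.

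\textbf{A minor circularity.} For the analysis of the components with $k>s$ you need only the classical description of $\aut(\mathbb H^N,0)$ (Chern--Moser), not of $\aut(P^N_q,0)$; the latter is obtained in Section~\ref{s:auto} as a consequence of Theorem~\ref{explicitformula}, so citing it in the proof of Theorem~\ref{Main1} would be circular.
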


The proof of Theorem \ref{Main1} will be given in Section \ref{s:auto} below.

\begin{remark}\label{rem:intro}{\rm
In the equidimensional case $N=n$, we note that any subset $K$ and mapping $\sigma$ as in (i) in Theorem \ref{Main1} must be such that $K=\{s+1,\ldots,n\}$ and such that $\sigma$ can be extended to a permutation $\tilde \sigma$ on $\{1,\ldots,n\}$ with $q_k|p_{\tilde\sigma(k)}$ for all $k=1,\ldots, n$, and vice versa, any such permutation $\tilde \sigma$ induces a mapping $\sigma $ by taking $K:=\{s+1,\ldots, n\}$ and $\sigma:=\tilde\sigma|_K$. If we reorder the coordinates on the source side so that the permutation $\tilde \sigma$ becomes the identity, then any $n\times n$ matrix $W$ satisfying (a) and (b) has the block form
\begin{equation}\label{equidim}
W=
\begin{pmatrix} U & 0\\ 0& D
\end{pmatrix},
\end{equation}
where $U$ is a unitary $s\times s$ matrix and $D$ is a diagonal $(n-s)\times (n-s)$ matrix whose diagonal elements have modulus one. The corresponding mapping $H_{\sigma, W}$ (where now $\sigma$ is the identity on $\{s+1,\ldots,n\}$) is then of the form $H_{\sigma, W}:=T_W\circ H_0$, where 
$$H_0(z,w):=(z_1^{p_1/q_1},\ldots, z_n^{p_n/q_n},w)$$ 
and $T_W$ is the automorphism of $P^n_q$ given by $$T_W(z,w)=(z'U,z''D,w)$$
with $z'=(z_1,\ldots,z_s)$ and $z''=(z_{s+1},\ldots, z_n)$. Returning to the original ordering of the source coordinates, we recover the equidimensional result stated in Theorem 0 above. We notice a redundancy in the statement of the theorem in this case; the additional mappings afforded by the choice of $W$ can be incorporated into the action of the stability group. In the general case, there may also be some redundancy in that $H_{\sigma,W}$ could equal $T\circ H_{\sigma,W'}$ for $W\neq W'$ and a suitable choice of automorphism $T$, but for $N>n$ there will be (in general) different choices of $W$ such that the corresponding mappings are not related by an automorphism of $P^N_q$. This is explained more closely in the context of an example in Section \ref{s:ex}.
}
\end{remark}

A consequence of Theorem \ref{Main1} (or, more precisely, a consequence
of Theorem \ref{explicitformula} below) is the following ``localization principle'' (c.f.\ \cite{DiniSelvaggi97}, \cite{LanducciSpiro09}).

\begin{theorem}\label{Cor:alg} Consider $P^n_{p}\subset \bC^{n+1}$ and
$P^N_{q}\subset \bC^{N+1}$, where $p=(p_1,\ldots, p_n)$ and $q=(q_1,\ldots, q_N)$
are an $n$-tuple and $N$-tuple, respectively, of positive integers,  and assume that
$N-n<n$. If a local holomorphic mapping $H\colon (\bC^{n+1},0)\to (\bC^{N+1},0)$
sends $P^n_{p}$ transversally into $P^N_{q}$, then $H$ extends as an algebraic mapping which is holomorphic in a neighborhood of $P^n_{p}$.
\end{theorem}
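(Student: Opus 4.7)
The plan is to deduce Theorem~\ref{Cor:alg} directly from the explicit description provided by Theorem~\ref{Main1} (and the promised more explicit version, Theorem~\ref{explicitformula}). By Theorem~\ref{Main1}, any transversal local holomorphic map $H\colon(\bC^{n+1},0)\to(\bC^{N+1},0)$ sending $P^n_p$ into $P^N_q$ can be written as $H=T\circ H_{\sigma,W}$ for some admissible data $\sigma,K,W$ and some automorphism $T$ of $P^N_q$ fixing $0$. Thus it suffices to prove, separately, that $H_{\sigma,W}$ is an algebraic (in fact polynomial) entire mapping, and that $T$ is an algebraic biholomorphism defined on some neighborhood of $P^N_q$.

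The first step is immediate from the formula \eqref{HsigmaW}. For indices $j\le s$ the component $F_j(z)=\sum_i u_{ij}z_i^{p_i}$ is a polynomial; for $j\in K$ the divisibility condition $q_j\mid p_{\sigma(j)}$ guaranteed by condition (i) of Theorem~\ref{Main1} makes the exponent $p_{\sigma(j)}/q_j$ a positive integer, so after a choice of $q_j$-th root of the constant $u_{\sigma(j)j}$ the component $F_j(z)=u_{\sigma(j)j}^{1/q_j}\,z_{\sigma(j)}^{p_{\sigma(j)}/q_j}$ is again a monomial; and for $j\in\{s+1,\dots,N\}\setminus K$ the component is identically zero. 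Hence $H_{\sigma,W}$ is a polynomial map defined on all of $\bC^{n+1}$, therefore algebraic and holomorphic in a neighborhood of $P^n_p$.

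The second step uses the explicit decomposition of the stability group of $P^N_q$ at the origin given in Section~\ref{s:auto}: each such automorphism is built from scalings, unitary rotations of the $z$-coordinates in the spherical block, roots of unity in the weakly pseudoconvex directions, and the standard one-parameter family of Heisenberg-type translations, all of which are algebraic and holomorphic on an open neighborhood of $P^N_q$. Once both $H_{\sigma,W}$ and $T$ are known to be algebraic and holomorphic near $P^n_p$ and $P^N_q$ respectively, it only remains to note that $H_{\sigma,W}(P^n_p)\subset P^N_q$ together with continuity allow us to shrink a neighborhood $U$ of $P^n_p$ so that $H_{\sigma,W}(U)$ lies in the domain of holomorphy of $T$; the composition $H=T\circ H_{\sigma,W}$ is then algebraic and holomorphic on $U$, as required. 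The only conceptual obstacle is the input from Theorem~\ref{explicitformula} (the normal-form result), which is doing all the real work; once that is in hand, Theorem~\ref{Cor:alg} is a direct corollary.
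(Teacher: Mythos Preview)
Your proposal follows essentially the same route as the paper: both deduce algebraicity and holomorphic extension from the explicit description of $H$ furnished by Theorem~\ref{explicitformula}. The paper is slightly more direct---it reads off from formula~\eqref{list} that the $H_j^{q_j}$ and $H_{N+1}$ are rational with common denominator $\delta(z,w)$, and then identifies as the sole remaining check that $\delta$ does not vanish along $P^n_p$---whereas you pass through the factorization $H=T\circ H_{\sigma,W}$ and the decomposition of $T$ in Section~\ref{s:auto}. Unwinding your composition gives back exactly the paper's formula, so the two arguments coincide.

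One point deserves more care. You assert that the elementary automorphisms (in particular $\Psi_{b,r}$) are ``holomorphic on an open neighborhood of $P^N_q$'' without justification. This is precisely where the nonvanishing of the denominator $\delta$ enters, and it is the only nontrivial step in the whole proof; the paper singles it out explicitly rather than absorbing it into a blanket claim about the automorphism group. In your setup what you actually need is that $\delta$ does not vanish on the image $H_{\sigma,W}(P^n_p)\subset P^N_q$, which after composition is exactly the paper's condition $\delta\neq 0$ on $P^n_p$. You should isolate and verify this rather than declare it; once you do, your argument is complete and matches the paper's.
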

The (short) proof of Theorem \ref{Cor:alg} is given in Section \ref{s:expform}.
\begin{remark}
The extendability of $H$ as an algebraic map, possibly singular and multi-valued, follows from previous results due to Huang \cite{Huang94} (see also \cite{Z99}). In the setting of Theorem \ref{Cor:alg}, it follows (see the proof) that, in fact, the $H_k^{q_k}$ are rational, and are (possibly) ramified along a complex hypersurface that does not meet $P^n_p$.
\end{remark}

We shall conclude this introduction with a brief discussion of an analogous non-pseudoconvex situation.
Consider the ``positive signature'' counterparts of $P^N_{q}$, i.e.\ the ``pseudohyperboloids'' given by
\begin{equation}\label{e101-2}
 P^N_{q,\ell} = \{(z,w) \in \bC^N \times \bC: \im w = \langle z^q, \bar z^q\rangle_\ell \},
\end{equation}
where $\langle \cdot, \cdot \rangle_{\ell}$ is the standard hermitian form of signature $\ell>0$, i.e.,
$$
\langle u,v\rangle_\ell:=-\sum_{j=1}^\ell u_j v_j+\sum_{j=\ell+1}^{N}u_j v_j,\quad u,v\in\bC^N.
$$
In the Levi nondegenerate case, i.e.\ $q=(1,\ldots, 1)$, the pseudohyperboloid
$P^N_{q,\ell}$ coincides with the standard hyperquadric $Q^N_\ell$ of signature $\ell$. For a local holomorphic mapping $H\colon (\bC^{n+1},0)\to (\bC^{N+1},0)$ sending $Q^n_\ell$ into $Q^N_\ell$, with $\ell>0$, it is known by \cite{BH05} that $H=T\circ L$, where $L$ is a linear embedding of $Q^n_\ell$ into $Q^N_\ell$ and $T$ an automorphism of $Q^N_\ell$, {\it regardless} of the codimension $N-n$ (i.e.\ {\it super-rigidity} holds, in stark contrast to the pseudoconvex case, as in \cite{Faran86}, where $N-n<n$ is necessary for rigidity to hold). By following the same arguments as in the pseudoconvex case (modulo replacing the use of the rigidity result in \cite{Faran86} by the super-rigidity result in \cite{BH05}), one obtains analogous classification results to those in Theorems \ref{Main1} and \ref{explicitformula} for local mappings $P^n_{p,\ell}\to P^N_{q,\ell}$ with $\ell>0$, the major difference being that the conditions \eqref{codimcond} and \eqref{codimcond2} in Theorems \ref{Main1} and \ref{explicitformula}, respectively, are no longer needed. In the positive signature case ($\ell>0$), one needs, however, to distinguish between the coordinates that appear with a plus sign and those that appear with a minus sign in the hermitian form $\langle\cdot,\cdot\rangle_\ell$, which has as a consequence that there are different cases to consider and the results become more cumbersome to state. The diligent reader is invited to work out the details.

\section{An explicit formula for mappings from $P_{p}^n$ into $P_{q}^N$}\label{s:expform}
In this section, we shall prove the following result, which is the main ingredient in the proof of Theorem \ref{Main1}.

\begin{theorem}\label{explicitformula} Consider $P^n_{p}\subset \bC^{n+1}$ and
$P^N_{q}\subset \bC^{N+1}$, where $p=(p_1,\ldots, p_n)$ and $q=(q_1,\ldots, q_N)$
are an $n$-tuple and $N$-tuple, respectively, of positive integers, the latter arranged such that
$q_1=\ldots=q_s=1$ and $q_{k}\geq 2$, $k=s+1,\ldots, N$, for some $s\geq0$. Assume that
\begin{equation}\label{codimcond2}
N-n<n.
\end{equation}
If  $H\colon (\bC^{n+1},0)\to (\bC^{N+1},0)$ is a local holomorphic mapping
sending $P^n_{p}$ into $P^N_{q}$, transversal to $P^N_{q}$ at $0$, then there exists
\begin{enumerate}[{\rm (A)}]
\item a subset $K\subset \{s+1,\dots, N\}$ (possibly empty) and a
map $\sigma\colon K\to \{1,\ldots, n\}$ such that \(\#\sigma(K)\ge n-s\) and
\(q_k\, |\, p_{\sigma(k)} \ \text{for all}\ k\in K\),
\item a unitary $N\times N$ matrix $U=(u_{ij})$ with the property that, for $j\in\{s+1,\ldots, N\}$, it holds that $u_{ij}\neq 0$ if and only if $j\in K$ and $\sigma(j)=i$, and
\item $r\in \bR$, $\lambda>0$, and
$b = (b', b'') \in \bC^{n} \times \bC^{N-n}$ with the property that $c:=bU$ satisfies $c_k=0$ for $k=s+1,\ldots, N$,
\end{enumerate}
such that $H$ takes the following form:
\begin{equation}\label{list}
H_j(z,w)=
\begin{cases}
 \lambda\left(\sum_{i = 1}^n u_{ij} z_i^{p_i} + c_j w\right)/\delta(z,w), \quad &j=1,2,\dots s;\\
 \left(\lambda u_{\sigma(j)j} z_{\sigma(j)}^{p_{\sigma(j)}}\right)^{1/q_j} /\delta(z,w)^{1/q_j}, &j \in K;\\
 0, &j \in \{s+1,\dots N\} \setminus K;\\
 \lambda^2 w/\delta(z,w), &j=N+1,
\end{cases}
\end{equation}
where $\delta(z,w) := 1 - 2i\langle z^p, \bar b' \rangle - (r+i\langle b,\bar b\rangle) w$, and $z^p := (z_1^{p_1},\dots z_n^{p_n})$.

Furthermore,  if $s=0$ then $b=0$, and if $N=n$ then $\sigma$ can be extended to a permutation $\tilde \sigma$ on $\{1,\ldots,n\}$ with $q_k|p_{\tilde\sigma(k)}$ for all $k=1,\ldots, n$.
\end{theorem}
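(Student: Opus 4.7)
The plan is to lift $H$ to a holomorphic mapping between standard Heisenberg hypersurfaces, where Faran's rigidity theorem applies in the codimension range $N-n<n$, and then to unpack the resulting normal form. To this end, I would introduce the branched covers $\pi_p(z,w):=(z^p,w)$ and $\pi_q(z',w'):=((z')^q,w')$, which send $P^n_p$ and $P^N_q$ onto the Heisenberg hypersurfaces $\mathbb{H}^n=\{\im w=|Z|^2\}\subset\bC^{n+1}$ and $\mathbb{H}^N\subset\bC^{N+1}$, respectively. The composition
$$F:=\pi_q\circ H=\bigl(H_1^{q_1},\ldots,H_N^{q_N},H_{N+1}\bigr)$$
is then a holomorphic map sending $P^n_p$ into $\mathbb{H}^N$.

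Next, near any point of $\mathbb{H}^n$ whose $Z$-coordinates are all nonzero, I would choose a local holomorphic branch $\rho$ of $\pi_p^{-1}$. The composition $\widetilde H:=F\circ\rho$ is a nonconstant local holomorphic mapping of a piece of $\mathbb{H}^n$ into $\mathbb{H}^N$, which extends as an algebraic map by \cite{Huang94}. Transporting Faran's rigidity result to these unbounded realizations via the Cayley transform, and using the assumption $N-n<n$, I would obtain $\widetilde H=T\circ L$, where $L(Z,w)=(ZW,w)$ with $W$ an $n\times N$ matrix satisfying $WW^*=I_n$, and $T$ is an automorphism of $\mathbb{H}^N$.

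Writing the general automorphism $T$ as a composition of dilations, unitary rotations, and Heisenberg-type boosts, and pulling back through $\pi_p$, each component $F_k=H_k^{q_k}$ becomes an explicit rational function of $(z^p,w)$. Parametrizing $T$ so that the common denominator takes the form $\delta(z,w)=1-2i\langle z^p,\bar b'\rangle-(r+i\langle b,\bar b\rangle)w$ yields exactly the shape claimed in \eqref{list} for $H_k^{q_k}$ when $k\le s$ and for $H_{N+1}$ directly.

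The main obstacle is then to determine precisely when $H$ itself (as opposed to $H_k^{q_k}$) is single-valued and holomorphic at the origin, for $k>s$. In this range the numerator of $F_k$ is a linear combination of $w$ and of the distinct pure monomials $z_i^{p_i}$, of the form $\sum_i u_{ik}z_i^{p_i}+c_k w$; for $H_k=F_k^{1/q_k}$ to be holomorphic near $0$, the coefficient $c_k$ must vanish (otherwise a $w^{1/q_k}$-branch appears, forcing condition (c)), and the remaining sum must be either identically zero (so $H_k\equiv 0$ and $k\notin K$) or reduce to a single monomial $u_{\sigma(k)k}z_{\sigma(k)}^{p_{\sigma(k)}}$ whose exponent is divisible by $q_k$ (so $k\in K$ and $q_k\mid p_{\sigma(k)}$); two nonzero monomials would produce cross-terms in $(H_k)^{q_k}$ incompatible with the pure-monomial shape of the numerator. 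The bound $\#\sigma(K)\ge n-s$ then follows from the orthonormality $WW^*=I_n$ of the rows of $W$: the columns indexed by $K$ contribute at most $\#\sigma(K)$ to the column rank, those indexed by $\{1,\ldots,s\}$ at most $s$, and all remaining columns vanish, forcing $s+\#\sigma(K)\ge n$. The supplementary claims (that $b=0$ when $s=0$, and that $\sigma$ extends to a permutation when $N=n$) then fall out immediately from these constraints in the respective special cases.
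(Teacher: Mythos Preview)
Your overall strategy---lifting $H$ via the branched covers $\pi_p$ and $\pi_q$ to a map between Heisenberg hypersurfaces and invoking Faran's rigidity in the range $N-n<n$---matches the paper's proof exactly, including the analytic continuation back to the origin and the reduction to the explicit rational form for $H_k^{q_k}$. The argument that $c_k=0$ for $k>s$ is also the same.

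Two points deserve comment. First, your justification that the numerator of $H_k^{q_k}$ (for $k>s$) must reduce to a single monomial is imprecise as stated: you are not expanding a known $H_k$ and observing cross-terms, but rather asking whether a given two-term expression $u_{1k}\, z_1^{p_1}+u_{2k}\, z_2^{p_2}$ (times a unit) can be the $q_k$-th power of a holomorphic germ. The paper handles this by differentiating the identity $\delta(z,0)\cdot H_k^{q_k}(z,0)=\lambda(u_{1k} z_1^{p_1}+u_{2k} z_2^{p_2})$ in $z_1$ and in $z_2$ separately, concluding that $H_k^{q_k-1}(z,0)$ divides both $z_1^{p_1-1}$ and $z_2^{p_2-1}$, which is impossible since these are coprime and $H_k$ is not a unit. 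Your sketch needs a comparable argument here (e.g.\ this one, or the observation that $u_{1k} z_1^{p_1}+u_{2k} z_2^{p_2}$ is squarefree in $\bC\{z\}$).

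Second, your argument for $\#\sigma(K)\ge n-s$ via the rank of the $n\times N$ coefficient matrix is genuinely different from, and more elementary than, the paper's. The paper invokes a separate lemma (Lemma~\ref{finitemul}, proved in Section~\ref{s:geom} via an essential-finiteness argument) asserting that transversality of $H$ forces it to be a finite map at $0$; the inequality then follows from the finite codimension of the ideal generated by the components $H_j(z,0)$. Your observation that the first $n$ rows of the unitary matrix $U$ form an $n\times N$ matrix of full row rank $n$, whose columns for $j>s$ are either zero or supported on the single row $\sigma(j)$, yields $n\le s+\#\sigma(K)$ by pure linear algebra and bypasses the geometric lemma entirely. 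This is a nice simplification; just be careful to identify your $W$ (from the linear embedding $L$) with the matrix of coefficients $(u_{ij})_{i\le n}$ after absorbing the unitary part of $T$, so that the orthonormality $WW^*=I_n$ indeed applies to the coefficients you are analyzing.
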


\begin{proof} We introduce the map
$\tilde \phi_q(\tilde z,\tilde w) = (\tilde z_1^{q_1}, \dots \tilde z_N^{q_N} , \tilde w)$,
which is a holomorphic mapping sending $P_q^N$ into the (Heisenberg) sphere
$\mathbb H^N:=P^N_{(1,\ldots,1)}$. Thus, $\tilde \phi_q \circ H$ is a non-constant mapping
from a neighborhood of $0$ in $P_p^n$ into $\mathbb H^N$. We also introduce
$\phi_p(z,w) = (z_1^{p_1},\dots z_n^{p_n}, w)$, which is a
holomorphic mapping sending $P_p^n$ into $\mathbb H^n$. Let $a\in P_p^n$ be some point near $0$ whose coordinate components do not vanish, and $U$
a neighborhood of $a$ such that $U\cap \{z_j = 0\} =\emptyset$ for all
$j=1,\ldots , n$. We can also choose $U$ small so that $\phi_p$ is biholomorphic on $U$.

Now, let $\tau\in \aut(\mathbb H^n)$ be such that $\tau(\phi_p(a)) = 0$ and
$T\in \aut(\mathbb H^N)$ such that $T(0) = \phi_q(H(a))$. Consider the
following mapping defined on $\hat U = \tau(\phi_p(U))$
\[
 \hat H = T^{-1}\circ \phi_q \circ H \circ \phi_p^{-1} \circ \tau^{-1}.
\]
Clearly, $\hat H (\hat U \cap \mathbb H^n) \subset \mathbb H^N$
and $\hat H(0) = 0$. Since $N-n<n$, we can apply the rigidity theorem in \cite{Faran86}, mentioned in the introduction, to conclude that there
is an automorphism $\hat T\in \aut(\mathbb H^N,0)$ such that
$\hat H = \hat T \circ L$ with $L(z,w) = (z,0,w)$. This implies that the
following holds on $U$ and, by analytic continuation, in any connected open set containing $a$ where $H$ is defined (in particular, in an open neighborhood of $0$):
\begin{equation}
 \phi_q \circ H = T\circ \hat T \circ L \circ \tau \circ \phi_p.
\end{equation}
Since the non-constant mapping $T\circ \hat T \circ L \circ \tau$
sends $\mathbb H^n$ into $\mathbb H^N$ and $0$ into $0$, it follows that $T\circ \hat T \circ L \circ \tau = T'\circ L$
for some $T'\in \aut(H^N,0)$, and hence:
\begin{equation}
 \phi_q \circ H = T' \circ L\circ \phi_p
\end{equation}
It follows from the explicit description of $\aut(\mathbb H^N,0)$
(see, e.g., \cite{CM74} or \cite{BER00b}) that there are $\lambda >0$, $r\in \bR$, $b \in \bC^N$
and a unitary $N\times N$-matrix $U = (u_{ij})$ (i.e.\ $UU^*=U^*U=I$) such that
\begin{align}
 H_{N+1}(z,w) &= \lambda^2 w/\delta(z,w) \label{g}\\
 H_j^{q_j}(z,w) &= \lambda\left(\sum_{i = 1}^n u_{ij} z_i^{p_i} + c_jw\right)/\delta(z,w), \quad \text{for} \quad j=1,2,\dots N, \label{f}
\end{align}
where
\begin{align}
 c_j &= \sum_{i = 1}^N u_{ij} b_i\label{c} \quad \text{for} \quad j=1,2,\dots N\\
 \delta(z,w) &=1-2i\langle z^p, b'\rangle -(r+i\langle b,\bar b\rangle)w \label{delta}, \quad b=(b',b'')\in \bC^n\times \bC^{N-n}.
\end{align}
Recall that $q_1 = \dots q_s = 1$ and $2\le q_{s+1} \le \dots \le q_N$. By setting $z=0$ in \eqref{f},
we have
\begin{equation}\label{e310}
\left( 1-(r+i\langle b,\bar b\rangle)w\right) H_j^{q_j}(0,w) = c_jw.
\end{equation}
If $c_j \ne 0$, then $H_j^{q_j}(0,w)$ divides $w$ in $\bC\{w\}$.
This is impossible if $q_j >1$. Thus, $c_j=0$ for $j=s+1,\dots N$.
Let us define
$$K=\{ k\ge s+1\ | \ u_{tk} \ne 0\ \text{for some} \ 1\le t \le n\}.$$
For $k\in K$, we claim that there is a unique $t^* \in \{1,2,\dots n\}$
such that $u_{t^*k}\neq 0$ and $u_{tk} = 0$ for all $1\le t \le n$, $t \ne t ^*$.
To prove the claim, suppose that there are
two indices $t$, say $t = 1,2$, such that $u_{kt} \ne 0$.
Setting $z_3 = \dots = z_n =w=0$ in equation \eqref{f} we would obtain
\begin{equation}\label{e311}
 (1 - b_1z_1^{p_1} - b_2z_2^{p_2}) H_k^{q_k}(z,0) = \lambda (u_{1k}z_1^{p_1} + u_{2k}z_2^{p_2}),
\end{equation}
which is impossible. Indeed, by differentiating
both sides of \eqref{e311} with respect to $z_1$ we note that
$H_k^{q_k-1}(z,0) \ | \ z_1^{p_1-1}$. The same argument with $z_1$ replaced by $z_2$ shows that
$H_k^{q_k-1}(z,0) \ | \ z_2^{p_2-1}$, which would lead to a contradiction since
$q_k-1\ge 1$ and $H_k$ is not an unit. The claim follows. We now define a map $\sigma \colon K \to \{1,2,\dots n\}$ by
$\sigma(k) = t^*$. Thus, we note that $H_k\equiv 0$ for $k\in \{s+1,\ldots, N\}\setminus K$ and
\begin{equation}
  H_k^{q_k}(z,w) = \lambda  u_{\sigma(k)k} z_{\sigma(k)}^{p_{\sigma(k)}}/\delta(z,w), \quad \text{for} \quad k\in K.
\end{equation}
From this it readily follows that $q_k \ | \ p_{\sigma(k)}$. We conclude that
\begin{equation}\label{e312}
 H_k(z,w) = \lambda v_{\sigma(k)}  z_{\sigma(k)}^{p_{\sigma(k)}/q_k}/\delta(z,w)^{1/q_k}, \quad \text{for} \quad k \in K.
\end{equation}
where $v_{\sigma(k)}^{q_k} = u_{\sigma(k)k}$.

To show that $\#\sigma(K) \ge n-s$, we shall need the following lemma, whose proof is deferred to Section \ref{s:geom}.
\begin{lemma}\label{finitemul}
Let $H\colon (\bC^{n+1},0) \to (\bC^{N+1},0)$ be a holomorphic mapping sending $P^n_p$ to $P^N_q$, transversal to $P^N_{q}$ at $0$. Then $H$ is finite at $0$, i.e.\ the ideal $\mathcal I(H)$ generated by the components of $H$ has finite codimension in $\bC\{z,w\}$.
\end{lemma}

We observe from \eqref{g} that $H$ is transversal to $P^N_{q}$ at $0$, which is well known to be equivalent to $w\in \mathcal I(H)$ (see e.g.\ \cite{ER06} for a general discussion). Thus, by Lemma \ref{finitemul}, $\mathcal I(H)$ has finite codimension in $\bC\{z,w\}$. Since $w\in \mathcal I(H)$, we also have
$$
\dim \bC\{z,w\}/\mathcal (H)=\dim \bC\{z\}/\mathcal I(h),
$$
where $h(z):=H(z,0)$.
Since $h_k\equiv 0$ for $j\in \{s+1,\dots N\} \setminus K$, $h_{N+1}\equiv 0$, and $h_k(z)$, for $k\in K$, differs from $z_{\sigma(k)}^{p_{\sigma(k)}/q_k}$ only by a unit in $\bC\{z\}$, we conclude that $\mathcal I(h)$ is contained in the ideal generated by $h_1,\ldots, h_s$ and $z_{\sigma(k)}$ for $k\in K$. Since the latter ideal must have finite codimension, it follows that $s+\#\sigma(K)\geq n$, or equivalently, $\#\sigma(K)\geq n-s$. This proves the existence of the subset $K\subset \{s+1,\ldots, N\}$ and the mapping $\sigma\colon K\to \{1,\ldots, n\}$ possessing the properties claimed in (A) of Theorem \ref{explicitformula}. The existence of $r\in \bR$, $\lambda>0$, $b\in \bC^N$ and a unitary matrix $U=(u_{ij})$ was established above. The property in (B) of $U$ and that in (C) of $b$ were also established.

If $s=0$, then we have $bU=0$, and since $U$ is invertible, we deduce that $b=0$. If $N=n$, then (A) immediately implies that $K=\{s+1,\ldots,n\}$ and $\#\sigma(K)=n-s$, which in turn implies that $\sigma$ is injective. It is clear that $\sigma$ can be extended to a permutation.
\end{proof}

\begin{proof}[Proof of Theorem $\ref{Cor:alg}$]
It follows immediately from Theorem \ref{explicitformula} that $H$ is algebraic; in fact, $H^{q_k}$ for $k=1,\ldots, N$ and $H_{N+1}$ are all rational with poles along the complex hypersurface $\delta(z,w)=0$ (unless $b=0$ and $r=0$, in which case $H$ is a polynomial mapping). To complete the proof of Theorem $\ref{Cor:alg}$ it remains to verify that $\delta(z,w)$ does not vanish along $P^n_p$ (i.e.\ for $w=u+i\langle z^p,\bar z^p\rangle$). This is straightforward and left to the reader.
\end{proof}

\section{Stability group of $P_q^N$ at the origin}\label{s:auto}

A special case of Theorem~\ref{explicitformula} is when $P_p^n = P_q^N$. In this case, Theorem~\ref{explicitformula} describes the stability group of $P^N_q$ at $0$ (i.e.\ the group of automorphisms of $P_q^N$ preserving the origin), denoted by $\aut(P^N_q,0)$. This description is previously known due to work mentioned in the introduction. In this section, however, we shall (for the reader's convenience) use the formulae in Theorem~\ref{explicitformula} to provide a decomposition of the automorphisms of $P_q^N$ fixing the origin into simpler ones. First, we note that when $P^n_p=P^N_q$ the subset $K$ in (A) must equal $\{s+1,\ldots, N\}$ and $\sigma$ is a permutation of $K$ such that $q_{\sigma(k)}=q_k$ for all $k\in K$. Also, the unitary matrix $U$ in (B) must have the block form
\begin{equation}\label{equidim2}
W=
\begin{pmatrix} \tilde U & 0\\ 0& E
\end{pmatrix},
\end{equation}
where $\tilde U$ is a unitary $s\times s$ matrix and $E$ is a unitary $(n-s)\times (n-s)$ matrix (such that after reordering the coordinates $(z_{s+1},\ldots, z_{N})$ on the source side according to the permutation $\sigma$, the matrix $E$ becomes diagonal with diagonal elements of modulus one). It then also follows that $b\in \bC^N$ in (C) is of the form $b=(\beta,0)\in \bC^s\times\bC^{N-s}$. (Recall that if $s=0$, then $b=0$.)
For each permutation $\sigma$ of $K=\{s+1,\dots N\}$ such that $q_{\sigma(k)} = q_k$ for all $k\in K$,
we define
\begin{equation}
 \Sigma_{\sigma}(z,w) = (z_1,\dots ,z_s, z_{\sigma(s+1)},\dots z_{\sigma(N)}, w).
\end{equation}
Also, for each $\lambda>0$, we define the (non-isotropic) dilation $\Delta_\lambda$,
\begin{equation}\label{dilation}
 \Delta_\lambda(z,w) = (\lambda z_1,\dots, \lambda z_s,\lambda^{1/q_{s+1}}z_{s+1} \dots , \lambda^{1/q_N}z_N, \lambda^2w),
\end{equation}
and, for each $b=(\beta,0)\in \bC^s\times\bC^{N-s}$ (with $b=0$ if $s=0$) and $r>0$, we define
\begin{equation}
\Psi_{b,r}(z,w) = \left(\frac{z_1+\beta_1w}{\delta(z,w)},\ldots, \frac{z_s+\beta_sw}{\delta(z,w)}, \frac{z_{s+1}}{\delta(z,w)^{1/q_{s+1}}},\ldots, \frac{z_{N}}{\delta(z,w)^{1/q_{N}}}, \frac{w}{\delta(z,w)}\right),
\end{equation}
where as above $\delta (z,w)= 1-2i\langle z^q, b\rangle - (r+i\langle b,\bar b \rangle)w$.
Finally, for each unitary $s\times s$ matrix $\tilde U$ and $\theta_{s+1},\ldots,\theta_N\in \bR$, we define
\begin{equation}
 \Lambda_{\tilde U,\theta}(z,w) = ((z_1,\dots z_s)\tilde U, e^{i\theta_{s+1}}z_{s+1},\dots, e^{i\theta_N}z_N, w).
\end{equation}
It is readily seen from Theorem~\ref{explicitformula} that $\Sigma_{\sigma},\Delta_\lambda, \psi_{b,r}, \Lambda_{\tilde U,\theta} \in \aut(P_q^N,0)$, and it is straightforward (and left to the reader) to check, using Theorem \ref{explicitformula}, that these elementary mappings generate $\aut(P_q^N,0)$ via compositions; we mention here that $\aut(P_q^N,0)$ is a finite dimensional Lie group (see \cite{BMR02}).

\begin{theorem}\label{thm:decomposition} The stability group of $P_q^N$
at $0$ consists of mapping of the form
\begin{equation}\label{decomposition}
 T = \Delta_\lambda\circ \Lambda_{\tilde U,\theta}\circ\Psi_{b,r}\circ\Sigma_\sigma,
\end{equation}
for $\tilde U,\theta,b,r,\lambda,\sigma$ as described above.
Furthermore, the identity component $\aut_{\mathrm{Id}}(P_q^N,0)$
consists of mapping of form~\eqref{decomposition} in which $\sigma = \mathrm{Id}$.
In fact, each choice of $\sigma$ gives rise to a connected component of $\aut(P_q^N,0)$.
\end{theorem}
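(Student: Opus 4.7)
The plan is to reduce everything to Theorem~\ref{explicitformula} applied in the special case $P^n_p=P^N_q$ and then carry out a short topological analysis.

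For the decomposition, given $T\in \aut(P_q^N,0)$, Theorem~\ref{explicitformula} supplies a permutation $\sigma$ of $\{s+1,\ldots,N\}$ with $q_{\sigma(k)}=q_k$, a unitary $U$ of the block form~\eqref{equidim2}, a vector $b=(\beta,0)\in \bC^s\times\bC^{N-s}$, and scalars $r\in\bR$, $\lambda>0$, such that $T$ is given by~\eqref{list}. I would write $\tilde U$ for the top-left block of $U$ and define phases $\theta_j$ by $u_{\sigma(j),j}=e^{iq_j\theta_j}$ for $j>s$, then compute $\Delta_\lambda\circ\Lambda_{\tilde U,\theta}\circ\Psi_{b,r}\circ\Sigma_\sigma$ from right to left and compare to \eqref{list}. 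The identity $\sum_{i} u_{ij}z_i^{p_i}+c_jw=\sum_i \tilde U_{ij}(z_i+\beta_iw)$ for $j\le s$ (which uses the block structure of $U$ together with $c_j=\sum_i u_{ij}b_i$ and $b_i=0$ for $i>s$) handles the first $s$ components, and $(\lambda u_{\sigma(j),j})^{1/q_j}=\lambda^{1/q_j}e^{i\theta_j}$ handles the components with $j>s$; the two denominators $\delta$ match after choosing the parameter $b$ in $\Psi_{b,r}$ appropriately (a conjugation absorbed into the parametrization), and the $(N+1)$-st component $\lambda^2 w/\delta$ is immediate.

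For the component structure, let $S_\sigma$ denote the set of mappings of the form~\eqref{decomposition} with permutation $\sigma$. It is the continuous image of the connected space $U(s)\times \bR^{N-s}\times\bC^s\times\bR\times \bR_{>0}$ (with the $U(s)$ and $\bC^s$ factors trivial when $s=0$), hence $S_\sigma$ is path-connected. In particular $S_{\mathrm{Id}}$ contains the identity map and lies in $\aut_{\mathrm{Id}}(P_q^N,0)$. The crucial remaining point is that $\sigma$ is a locally constant function of $T$: reading off \eqref{list}, the $(N-s)\times(N-s)$ submatrix of $dT(0)$ indexed by the source and target coordinates $\{s+1,\ldots,N\}$ is a weighted permutation matrix whose unique nonzero entry in column $j$ lies in row $\sigma(j)$ and equals $\lambda^{1/q_j}e^{i\theta_j}$. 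Since $T\mapsto dT(0)$ is continuous and the nonzero pattern of a weighted permutation matrix is a discrete invariant, $\sigma$ depends continuously on $T$. Hence each $S_\sigma$ is clopen and connected, i.e.\ a full connected component of $\aut(P_q^N,0)$, and $S_{\mathrm{Id}}=\aut_{\mathrm{Id}}(P_q^N,0)$.

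The main obstacle I anticipate is establishing that $\sigma$ is genuinely an invariant of $T$ rather than a free parameter in the decomposition; this is precisely what the Jacobian observation above provides, and once it is in hand the topological statements reduce to a standard clopen-and-connected argument. The decomposition itself is then essentially bookkeeping around \eqref{list}.
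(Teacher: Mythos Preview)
Your proposal is correct and follows the same approach as the paper, which simply asserts that the decomposition is ``straightforward (and left to the reader)'' from Theorem~\ref{explicitformula} and cites the Lie group structure of $\aut(P_q^N,0)$ for the component claim. Your Jacobian argument---reading $\sigma$ off the nonzero pattern of the lower $(N-s)\times(N-s)$ block of $dT(0)$---is a clean way to fill in the one point the paper leaves implicit, namely that $\sigma$ is an invariant of $T$ and hence locally constant.
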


\begin{remark}{\rm A similar decomposition for CR mappings between connected pieces of generalized pseudoellipsoids was also given is the recent paper \cite{MontiMorbidelli12} using a very different method.
}
\end{remark}

We shall now give a proof of Theorem \ref{Main1}.

\begin{proof}[Proof of Theorem $\ref{Main1}$] The implication (ii)$\implies$(i) follows from Theorem~\ref{explicitformula} (A). Moreover, any mapping $H$ as in (ii) is of the form described in Theorem~\ref{explicitformula}. It is straightforward (and left to the reader) to check that there are $\sigma$ and $W$ as in Theorem \ref{Main1} and $\tilde U,\theta, b, r,\lambda$ as in Theorem \ref{thm:decomposition} such that
$$
H=\Delta_\lambda\circ\Lambda_{\tilde U,\theta}\circ\Psi_{b,r}\circ H_{W,\sigma}.
$$

Next, assume that (i) holds. We shall construct a transversal map $H:P_p^{n} \to P_q^{N}$ as follows. If $K=\emptyset$, then necessarily $s\ge n$. In this case, we can simply take
\begin{equation}
H(z,w) = (z_1^{p_1}, \dots z_n^{p_n}, 0,\dots 0, w).
\end{equation}
Suppose now that $K$ is nonempty. Since \(\#\sigma(K) \ge n-s\),
we can write
\[\{1,2,\dots, n\} \setminus \sigma(K) = \{t_1,t_2,\dots t_{r}\}\]
for some $r \le s$ (with $r=0$ if $\sigma(K)=\{1,\ldots, n\}$). We define a transversal (to $P_q^{N}$ at 0) map $H(z,w)=(F(z),w)$ with
\begin{equation}\label{themap}
F_k(z)=\begin{cases}
 z_{t_k}^{p_{t_k}}, \quad &k =1,2,\dots r,\\
v_kz_{\sigma(k)}^{ p_{\sigma(k)}/q_k}, &k\in K\\
0 & \text{{\rm otherwise}},
\end{cases}
\end{equation}
where the coefficients $v_k$ are chosen such that, for every $l\in \sigma(K)$,
\begin{equation}
\sum_{k\in \sigma^{-1}(l)} |v_k|^{2q_l} = 1.
\end{equation}
It is easy to check that $H$ sends $P_p^n$ into $P_q^N$. The proof is complete.
\end{proof}

\section{Proof of Lemma~\ref{finitemul}}\label{s:geom}

Recall (see \cite{BER99a}) that given a real-analytic hypersurface $M\subset \bC^{n+1}$  and $p\in M$, there are so-called normal coordinates $(z,w)\in \bC^{n}\times\bC$, vanshing at $p$, such that $M$ is given by
\[
\im w = \phi(z,\bar z, \re w),
\]
where $\phi(z,0,s)=\phi(0,\chi,s)\equiv 0$, or in complex form
\begin{equation}\label{complexdef}
w = Q(z,\bar z, \bar w),
\end{equation}
where $Q$ satisfies $Q(0,\chi, \tau) \equiv Q(z,0,\tau) \equiv \tau$ and
the reality condition
\begin{equation}
Q(z,\bar z, \bar Q(\bar z, z,w )) \equiv w.
\end{equation}
We note that $P^n_p$ and $P^N_q$ are already presented in normal coordinates.
It is convenient to use the complex defining equation \eqref{complexdef}
to define the notions of essential finiteness and essential type
as follows. We replace $\bar z, \bar w$ by independent variables $\chi, \tau$
and write
\[
Q(z,\chi, 0) = \sum_{I\in \mathbb{N}^n} q_{I}(z)\chi^I.
\]
Let $\mathcal{I}_M$ be the ideal in $\bC[[z]]$ generated by $\{q_I(z)\}_{I\in \mathbb{N}^n}$.
Following Baouendi, Jacobowitz and Treves (see \cite{BER99a}),  we shall say that $M$ is {\it essentially finite} at $p$ if $\mathcal{I}_M$ is of finite codimension
in $\bC[[z]]$. The dimension $\dim_{\bC} \bC[[z]]/{\mathcal{I}_M}$ is a biholomorphic
invariant of $M$ and is called the {\it essential type} of $M$ at $p$, denoted
by $\esstype_pM$. We note that e.g.\ $\mathcal I_{P^n_p}$ is generated by $z_1^{p_1}, \ldots, z_n^{p_n}$ and therefore $P^n_p$ is essentially finite at $0$.
Recall also that a germ of a holomorphic mapping $H\colon (\bC^{n+1},p)\to (\bC^{N+1},p')$ is said to be {\it finite} at $p$ if the ideal $\mathcal{I}(H)$ generated by the components of $H$ in the ring $\mathcal O_p$ of germs of holomorphic functions at $p$ is of finite codimension. In this case, we shall refer to this codimension as the {\it multiplicity} of $H$ at $p$,
$$
\mult_p H:=\dim_{\bC} \mathcal O_p/\mathcal{I}(H).
$$
It is well known (see e.g.\ \cite{AGV85}) that if $H$ is finite at $p$, then for every $q$ close $p$ the number of preimages $m:=H^{-1}(H(q))$ is finite and $m\leq \mult_p H$. (In the equidimensional case $N=n$, the generic number of preimages equals $\mult_p H$, but in general we only have the inequality).  Now we can state and prove the following result, which in view of the above comments regarding $P^n_p$ proves Lemma \ref{finitemul}.
\begin{proposition}\label{multid}
Let $M$ and $M'$ be real-analytic hypersurfaces in
$\bC^{n+1}$ and $\bC^{N+1}$ respectively and let $p\in M$, $p'\in M'$. Suppose that
$H:(\bC^{n+1},p) \to (\bC^{N+1},p')$ is a germ of holomorphic mapping sending
$(M,p)$ into $M'$. If $M$ is essentially finite at $p$ and $H$ is transversal
to $M'$ at $p'=H(p)$, then $H$ is finite and
\begin{equation}\label{esstype:relation}
\mult_p H \leq \esstype_p M.
\end{equation}
\end{proposition}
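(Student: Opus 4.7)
The plan is to reduce the multiplicity of $H$ at $p$ to the codimension of an ideal in $\bC\{z\}$ generated by the components of $f(z):=F(z,0)$, and then to show that this ideal contains the essential-type ideal $\mathcal I_M$. Working in normal coordinates $(z,w)\in\bC^n\times\bC$ at $p$ and $(z',w')\in\bC^N\times\bC$ at $p'$, and writing $H=(F,G)$, the condition $H(M)\subset M'$ gives the complexified identity
\[
G(z,Q(z,\chi,\tau))=Q'\bigl(F(z,Q(z,\chi,\tau)),\bar F(\chi,\tau),\bar G(\chi,\tau)\bigr).
\]
Setting $\chi=\tau=0$ and using $Q(z,0,0)=0$ and $Q'(z',0,0)=0$ yields $G(z,0)\equiv 0$, so $G(z,w)=w\cdot u(z,w)$ for some $u\in\mathcal O_p$. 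The transversality of $H$ at $p$ is equivalent to $u(0)\ne 0$ (see e.g.\ \cite{ER06}), so $u$ is a unit, $w\in\mathcal I(H)$, and one computes
\[
\mathcal O_p/\mathcal I(H)\;\cong\;\bC\{z\}/(f_1,\ldots,f_N)\;=:\;\bC\{z\}/\mathcal I(f).
\]

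It therefore suffices to prove the containment $\mathcal I_M\subseteq\mathcal I(f)$: this makes $\bC\{z\}/\mathcal I(f)$ a quotient of $\bC\{z\}/\mathcal I_M$, which has $\bC$-dimension $\esstype_p M$ by essential finiteness, so both the finiteness of $H$ and the inequality \eqref{esstype:relation} follow. With the notation $Q(z,\chi,0)=\sum_{|I|\ge 1}q_I(z)\chi^I$ (the sum starting at $|I|\ge 1$ since $Q(z,0,0)=0$) and $Q'(z',\chi',0)=\sum_{|I'|\ge 1}q'_{I'}(z')(\chi')^{I'}$ (with $q'_{I'}(0)=0$ since $Q'(0,\chi',0)=0$), I would specialize the identity above to $\tau=0$ and use $\bar G(\chi,0)\equiv 0$ (immediate from $G(z,0)\equiv 0$ by conjugation of coefficients). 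This collapses the reflection identity to
\[
Q(z,\chi,0)\,u\bigl(z,Q(z,\chi,0)\bigr)\;=\;\sum_{|I'|\ge 1}q'_{I'}\bigl(F(z,Q(z,\chi,0))\bigr)\,\bar F(\chi,0)^{I'}
\]
in $\bC[[z,\chi]]$, and I would extract $q_I(z)\in\mathcal I(f)$ for each $|I|\ge 1$ by comparing $\chi^I$-coefficients and inducting on $|I|$.

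In the inductive step, the $\chi^I$-coefficient of the left side equals $u(z,0)\,q_I(z)$ plus a polynomial in the $q_J(z)$'s with every index $|J|<|I|$, which lies in $\mathcal I(f)$ by induction. On the right, each $\bar F(\chi,0)^{I'}$ vanishes at $\chi=0$ to order at least $|I'|\ge 1$, so contributions to $\chi^I$ come only from pairs $(I',L)$ with $|L|\ge|I'|\ge 1$, forcing $|I-L|\le|I|-1$; a chain-rule (Faa di Bruno) expansion shows that the $\chi^{I-L}$-coefficient of $q'_{I'}(F(z,Q(z,\chi,0)))$ equals $q'_{I'}(f(z))$ when $I-L=0$, and is otherwise a polynomial in $\{q_J(z):1\le|J|\le|I-L|<|I|\}$ with holomorphic coefficients built from derivatives of $q'_{I'}$ evaluated at $f(z)$ and derivatives of $F$ at $(z,0)$. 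Since $q'_{I'}(0)=0$ forces $q'_{I'}(f(z))\in\mathcal I(f)$, every such term lies in $\mathcal I(f)$ by the inductive hypothesis, so the right-hand $\chi^I$-coefficient is in $\mathcal I(f)$. Unitness of $u(z,0)$ then yields $q_I(z)\in\mathcal I(f)$, closing the induction. The main technical obstacle is precisely this combinatorial bookkeeping — tracking that every monomial on the right picks up at least one factor of the form $q'_{I'}(f(z))$ or $q_J(z)$ with $|J|<|I|$ — but once it is carried out, the containment $\mathcal I_M\subseteq\mathcal I(f)$, and hence the proposition, follow.
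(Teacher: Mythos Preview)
Your argument is correct and follows the same overall strategy as the paper: reduce $\mult_p H$ to $\dim_\bC \bC\{z\}/\mathcal I(f)$ using transversality, and then prove the containment $\mathcal I_M\subseteq \mathcal I(f)$. The one substantive difference is in how that containment is obtained. You work with the \emph{on-shell} reflection identity (substituting $w=Q(z,\chi,\tau)$), which forces the argument of $F$ to carry $Q(z,\chi,0)$ and therefore requires your induction on $|I|$ to separate the $q_I(z)$ contribution from the lower-order $q_J$'s. The paper instead uses the \emph{off-shell} defining-function relation
\[
G(Z)-Q'\bigl(F(Z),\bar H(\xi)\bigr)=a(Z,\xi)\,\bigl(w-Q(z,\xi)\bigr),
\]
valid for all $(Z,\xi)$; setting $w=\tau=0$ directly (rather than $w=Q(z,\chi,0)$) yields
\[
a(z,0,\chi,0)^{-1}\,Q'\bigl(f(z),\bar f(\chi),0\bigr)=Q(z,\chi,0),
\]
with no $Q$-dependence inside $F$. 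Differentiating in $\chi$ at $\chi=0$ then shows $q_I(z)\in\mathcal I(f)$ in one stroke, with no induction needed. So your approach works, but the paper's choice of the multiplier form of the mapping identity buys a noticeably shorter bookkeeping step; conversely, your on-shell route avoids introducing the auxiliary function $a$ and stays closer to the intrinsic CR identity.
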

\begin{proof}
Suppose that $M$ and $M'$ are given in normal coordinates
$Z=(z,w)$ and $Z'=(z',w')$, vanishing at $p$ and $p'$ respectively, by complex defining functions $\rho$ and $\rho'$ of the forms:
\[
\rho(z,w,\bar z,\bar w) = w-Q(z,\bar z,\bar w), \quad \rho'(z',w',\bar z',\bar w') = w'-Q'(z',\bar z', \bar w').
\]
Since $H$ sends $M$ into $M'$, the following
holds for some real-analytic function $a(Z,\xi)$.
\begin{equation}\label{HsendsMintoMprime}
  G(Z) - Q'(F(Z), \bar H(\xi)) = a(Z,\xi) \,(w - Q(z,\xi)).
\end{equation}
Here, $H=(F,G)$ with $F=(F_1,\ldots, F_N)$. By setting $\xi = 0$, taking into account that $\bar H(0) = 0$, $Q(z,0,0) \equiv 0$ and
$Q'(z',0,0) \equiv 0$ we deduce that
\begin{equation}\label{Gw}
G(Z) = a(Z,0)\, w.
\end{equation}
Setting $w = \tau = 0$ and observing from \eqref{Gw} that $G(z,0) \equiv 0$ and $\bar G(\chi,0) \equiv 0$, we get
\begin{equation}\label{QQprime:rel}
 Q'(F(z,0), \bar F(\chi,0), 0) = a(z,0,\chi,0) \cdot Q(z,\chi,0).
\end{equation}
Since $H$ is transversal, we have $a(0) \ne 0$ (see e.g.\ \cite {BER07}). Therefore, $a(z,0,\chi,0)$ is non-vanishing for $(z,\chi)$ close to zero and hence
\begin{equation}\label{QQprime:rel2}
 a(z,0,\chi,0) ^{-1}\cdot Q'(F(z,0), \bar F(\chi,0), 0) = Q(z,\chi,0).
\end{equation}
We expand
\begin{equation}\label{expandQ}
Q(z,\chi,0) = \sum_I q_I(z)\,\chi^I.
\end{equation}
Let $\mathcal{I}_M$ and $\mathcal{I}(F)$ be the ideals in $\bC[[z]]$ generated by $\{q_I(z)\colon I\in \mathbb N^n\}$ and $\{F_j(z,0)\colon j=1,\dots N\}$, respectively. We claim that
\begin{equation}\label{inclusionideals}
\mathcal{I}_M \subset \mathcal{I}(F).
\end{equation}
Indeed, for each multi-index $I\in \mathbb{N}^n$, one has from \eqref{QQprime:rel2} that
\begin{equation}\label{diff}
q_I(z) = \frac{1}{I!}\,\frac{\partial^{I}}{\partial \chi^{I}}\biggl(a(z,0,\chi,0) ^{-1}\cdot Q'(F(z,0), \bar F(\chi,0), 0) \biggr) \biggr|_{\chi = 0}.
\end{equation}
If we expand
\begin{equation}\label{expandQprime}
Q'(z',\chi',0) = \sum_J q'_J(z)\,(\chi')^J,
\end{equation}
then it is clear from \eqref{diff} that $q_I(z)$ belongs to the ideal generated by the $q'_J(F(z,0))$, $J\in\mathbb N^N$, which in turn belongs to the ideal $\mathcal {I}(F)$ (since the ideal $\mathcal{I}_{M'}$, generated by the $q'_J(z')$, of course is contained in the maximal ideal).  Therefore, we obtain \eqref{inclusionideals}.
Furthermore, since $M$ is essentially finite, $\mathcal{I}_M$ is of finite codimension in $\bC[[z]]$ and so is $\mathcal{I}(F)$, by \eqref{inclusionideals}, and hence $F(z,0)$ is finite. Moreover,
\begin{equation}\label{e:16}
\mult_0(F(\cdot,0)) = \dim_{\bC}\bC[[z]]/\mathcal{I}(F) \leq \dim_{\bC}\bC[[z]]/\mathcal{I}_M = \esstype_0(M).
\end{equation}
On the other hand, it follows from \eqref{Gw} and the invertibility of $a(Z,0)$ that $w\in \mathcal{I}(H)$ and, hence, $H$ is also finite and
\begin{equation}\label{e:17}
\mult_0(H) = \mult_0(F(\cdot,0)).
\end{equation}
From \eqref{e:16} and \eqref{e:17}, we obtain \eqref{esstype:relation}.
\end{proof}

\section{An Example}\label{s:ex}

As mentioned in Remark \ref{rem:intro}, there is some redundancy in general in Theorem \ref{Main1}; it may happen that $H_{\sigma,W}=T\circ H_{\sigma',W'}$ for $W\neq W'$, $\sigma\neq \sigma'$, and a suitable automorphism $T$.
In the equidimensional case, the collection of all possible maps $H_{\sigma,W}$, for a given $\sigma$, is formed by the single orbit of one such map $H_{\sigma,W_0}$ under the action of the identity component $\aut_{\mathrm{Id}}(P_q^N,0)$ of the stability group, i.e.\ any $H_{\sigma,W}$ is of the form $T\circ H_{\sigma,W_0}$ for some $T\in \aut_{\mathrm{Id}}(P^N_q,0)$. (The orbit under the action of the full stability group has, potentially, several components corresponding to different permutations $\rho$ of $\{s+1,\ldots, N\}$ such that $q_{\rho(k)}=q_k$; see Section \ref{s:auto}.)

In this section, we shall give an example illustrating (hopefully) the general principle behind why both parameters $\sigma$ and $W$ in Theorem \ref{Main1} are needed in general. In particular, for a given $\sigma$, the orbit of single $H_{\sigma,W_0}$ under the action of the stability group $\aut(P^N_q,0)$ is in general ``smaller'' (in fact, lower dimensional) than the collection of all maps $H_{\sigma,W}$.

\begin{example} Let $M \subset \bC^4$ and $M'\in \bC^6$ be the hypersurface given by
\begin{equation}\label{example2}
M=\{\im w = |z_1|^4+|z_2|^8+|z_3|^{12}\},\quad M'=\{\im w' = |z'_1|^2+|z'_2|^2 + |z'_3|^2+|z'_4|^4+|z'_5|^4\}.
\end{equation}
Thus, in this example $n=3$, $N=5$, and $s=3$. In particular, $N-n=2<4=n$ and hence Theorem \ref{Main1} applies.
\begin{enumerate}[(a)]
\item Consider $K=\{4,5\}$ and $\sigma(4)=\sigma(5)=3$, and so $\#\sigma(K)=1>0=n-s$. For $a,b,c>0$ such that $a^4+b^4+c^2=1$, consider the following $n\times N$ matrix
\begin{equation}\label{WW'}
W_{a,b,c}:=
\begin{pmatrix} 1&0&0&0&0\\0&1&0&0&0\\0&0&c&a&b\end{pmatrix},
\end{equation}
which satisfies the requirements (a) and (b) in Theorem \ref{Main1}. The corresponding mapping is of the form
$$
H_{\sigma,W_{a,b,c}}(z,w)=(z_1^2, z_2^4, cz_3^6, az_3^3,bz_3^3,w).
$$
It is easy to check, using Theorem \ref{thm:decomposition}, that the orbits of $H_{\sigma,W_{a,b,c}}$ are disjoint for distinct values of $(a,b,c)$.

\item Consider the two mappings $H_{\sigma,W}$ and $H_{\sigma',W'}$, where $K=\{4,5\}$,
$$\sigma(4)=1,\ \sigma(5)=2,\qquad \sigma'(4)=2,\ \sigma'(5)=3,$$ 
and 
\begin{equation}\label{WW'2}
W:=
\begin{pmatrix} 0&0&0&1&0\\0&0&0&0&1\\1&0&0&0&0\end{pmatrix},\quad
W':=
\begin{pmatrix} 1&0&0&0&0\\0&0&0&1&0\\0&0&0&0&1\end{pmatrix}. 
\end{equation}
The corresponding mappings are of the form
\begin{equation}
\begin{aligned}
H_{\sigma,W}(z,w) &=(z_3^6, 0, 0, z_1,z^2_2,w)\\
H_{\sigma',W'}(z,w) &=(z_1^2, 0, 0, z^2_2,z^3_3,w)
\end{aligned}
\end{equation}
Again, it is straightforward to check that the orbits of $H_{\sigma,W}$ and $H_{\sigma',W'}$ are disjoint.
\end{enumerate}
\end{example}


\def\cprime{$'$}

\end{document}